\newtheorem{thm}{Theorem}[section]
\newtheorem{lem}[thm]{Lemma}
\newtheorem{defn}[thm]{Definition}
\newtheorem{exam}[thm]{Example}
\theoremstyle{definition}
\def\Int{\operatorname{Int}}
\def\Cl{\operatorname{Cl}}
\def\T{\mathcal{T}}
\def\U{\mathcal{U}}
\begin{document}
\setcounter{page}{1}
\title{\href{https://doi.org/10.22105/jfea.2022.356442.1230}{In Press: Journal of Fuzzy Extension and Applications}
	\\
	\textbf{A note on somewhat fuzzy continuity}}
\author{Zanyar A. Ameen\\
\footnotesize{Department of Mathematics, College of Science, University of Duhok, Duhok 42001, Iraq}\\
{\tt  zanyar@uod.ac}\\}
\date{}
\maketitle
	
\begin{abstract}
Thangaraj and Balasubramanian introduced the so-called somewhat fuzzy semicontinuous and somewhat fuzzy semiopen functions. Two years later, the same authors defined two other types of functions called somewhat fuzzy continuous and somewhat fuzzy open without indicating connections between them. At first glance, we may easily conclude (from their definitions) that every somewhat fuzzy continuous (resp. open) function is somewhat fuzzy semicontinuous (resp. semiopen) but not conversely. In this note, we show that they are equivalent. We further prove that somewhat fuzzy continuous functions are weaker than fuzzy semicontinuous functions.
\end{abstract}
\textbf{Key words and phrases:}  fuzzy continuous, somewhat continuous, somewhat fuzzy continuous,  somewhat fuzzy semicontinuous, somewhat fuzzy open\\
\textbf{2020 MSC:} 54A40, 03E72, 54A05, 54C99.

\section{Introduction and Preliminaries}\label{sec1}
One of the fundamental concepts of all mathematics, particularly  topology and analysis, is continuity of functions, which means that "small" changes in the input result in "small" changes in the output. After the introduction of the fuzzy set, Chang defined the notion of fuzzy continuity \cite{chang} of functions between fuzzy topological spaces in 1968. Following that, several generalizations of fuzzy continuity were introduced. In 1981, Azad \cite{azad} presented some classes of generalized fuzzy continuous functions. Fuzzy semicontinuity is one of them that is weaker than fuzzy continuity. In 2001-2003, Thangaraj and Balasubramanian, respectively, introduced somewhat fuzzy semicontinuous \cite{T1} and somewhat fuzzy continuous functions \cite{T2}. They only discussed their relationship with fuzzy continuous functions. In the present note, we show that somewhat fuzzy semicontinuity and somewhat fuzzy continuity are the same, and somewhat fuzzy continuity is weaker than fuzzy semicontinuity. It is worth stating somewhat continuity appeared in different topological structures, (see, \cite{z2,gen,T2}). These classes of functions have a great role in characterizing (soft) Baire spaces \cite{z3,T3} and weakly equivalence spaces \cite{gen}.

Let $X$ be a universe (domain). A mapping $\mu$ from $X$ to the unit interval $\mathbb{I}$ is named a fuzzy set in $X$. The value $\mu(x)$ is called the degree of the membership of $x$ in $\mu$ for each $x\in X$. The support of $\mu$ is the set $\{x\in X:\mu(x)>0\}$. The complement of $\mu$ is, denoted by $\mu^c$ (or $1-\mu$ if there is no confusion), given by $\mu^c(x)=1-\mu(x)$ for all $x\in X$. Let $\{\mu_j:j\in J\}$ be a collection of fuzzy sets in $X$, where $J$ is any index set. Then $\bigvee\mu_j(x)=\sup\{\mu_j(x):j\in J\}$ and $\bigwedge\mu_j(x)=\inf\{\mu_j(x):j\in J\}$ for each $x\in X$.

\begin{defn}\label{top}\cite{chang}
A collection $\T$ of fuzzy sets in $X$ is said to a fuzzy topology on $X$ if
	\begin{enumerate}[(i)]
		\item $0,1\in \T$; 
		\item $\mu\wedge\lambda\in T$ whenever $\mu,\lambda\in \T$; and
		\item $\bigvee\mu_j\in \T$ for any subcollection $\{\mu_j:j\in J\}$ of $\T$.
	\end{enumerate}
The pair $(X,\T)$ is called a fuzzy topological space. Members of $\T$ are called fuzzy open subsets of $X$ and their complements are called fuzzy closed sets. \end{defn}

Let $\lambda$ be a fuzzy set in $X$ and let $(X,\T)$ be a fuzzy topological space. The fuzzy interior of $\lambda$ is defined by $\Int(\lambda):=\sup\{\mu:\mu\leq\lambda, \mu\in T\}$  and the fuzzy closure of $\lambda$ is given by $\Cl(\lambda):=\inf\{\mu:\lambda\leq\mu, 1-\mu\in T\}$. The set $\lambda$ is called fuzzy semiopen \cite{azad} if $\lambda\leq\Cl(\Int(\lambda))$. The complement of every fuzzy semiopen set is fuzzy semiclosed. Evidently, every fuzzy open set is fuzzy semiopen but not the opposite (see, Example \ref{ex1}). The fuzzy semi-interior of $\lambda$ is defined by $\Int_s(\lambda):=\sup\{\mu:\mu\leq\lambda, \mu \text{ is fuzzy semiopen}\}$  and the fuzzy semi-closure of $\lambda$ is given by $\Cl_s(\lambda):=\inf\{\mu:\lambda\leq\mu, \mu\text{ is fuzzy semiclosed}\}$.

\begin{lem}\label{interior-lemma}\cite{fuzzy semi-int}
Let $\lambda$ be a fuzzy set in $X$ and let $(X,\T)$ be a fuzzy topological space. The following hold:
\begin{enumerate}[(i)]
	\item $\Int(\lambda)\leq\Int_s(\lambda)$.
	\item $\Cl_s(\lambda)\leq\Cl(\lambda)$.
\end{enumerate}
\end{lem}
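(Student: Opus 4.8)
The plan is to deduce both inequalities from a single observation: each of $\Int(\lambda)$ and $\Int_s(\lambda)$ is a supremum, and each of $\Cl(\lambda)$ and $\Cl_s(\lambda)$ is an infimum, taken over a family of fuzzy sets, and the family governing the non-semi version is \emph{contained} in the family governing the semi version. Since a supremum over a larger family can only increase and an infimum over a larger family can only decrease, the two inequalities follow immediately by monotonicity. The only genuine input is the already-recorded fact that every fuzzy open set is fuzzy semiopen (and, dually, that every fuzzy closed set is fuzzy semiclosed); everything else is bookkeeping with $\sup$ and $\inf$.

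For part (i), I would fix an arbitrary fuzzy open set $\mu$ with $\mu\leq\lambda$. Since every fuzzy open set is fuzzy semiopen, such a $\mu$ is a fuzzy semiopen set satisfying $\mu\leq\lambda$, so it belongs to the family $\{\nu:\nu\leq\lambda,\ \nu\text{ fuzzy semiopen}\}$ whose supremum defines $\Int_s(\lambda)$. Hence $\mu\leq\Int_s(\lambda)$. Taking the supremum over all such $\mu$ gives $\Int(\lambda)=\sup\{\mu:\mu\leq\lambda,\ \mu\in\T\}\leq\Int_s(\lambda)$, which is exactly (i).

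For part (ii), I would argue dually. First I note that the complement of a fuzzy closed set is fuzzy open, hence fuzzy semiopen, so its complement — the original fuzzy closed set — is fuzzy semiclosed; this simply combines the stated fact that open implies semiopen with the stated fact that the complement of a semiopen set is semiclosed. Now fix any fuzzy closed set $\mu$ with $\lambda\leq\mu$. By the previous remark $\mu$ is fuzzy semiclosed with $\lambda\leq\mu$, so it lies in the family $\{\nu:\lambda\leq\nu,\ \nu\text{ fuzzy semiclosed}\}$ whose infimum defines $\Cl_s(\lambda)$. Therefore $\Cl_s(\lambda)\leq\mu$, and taking the infimum over all such $\mu$ yields $\Cl_s(\lambda)\leq\inf\{\mu:\lambda\leq\mu,\ 1-\mu\in\T\}=\Cl(\lambda)$.

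I expect no substantial obstacle; the statement is a routine consequence of the definitions. The only point requiring a moment's care is the closed/semiclosed direction in (ii), where one must pass through complements to verify that fuzzy closed implies fuzzy semiclosed before invoking monotonicity of the infimum. Alternatively, (ii) can be obtained from (i) without repeating the argument, by applying (i) to $1-\lambda$ together with the complementation identities $\Cl(\lambda)=1-\Int(1-\lambda)$ and $\Cl_s(\lambda)=1-\Int_s(1-\lambda)$.
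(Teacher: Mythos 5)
Your proof is correct. Note, however, that the paper does not prove this lemma at all: it is stated with a citation to Yalvaç \cite{fuzzy semi-int} as a known result, so there is no in-paper argument to compare against. Your argument --- monotonicity of $\sup$ and $\inf$ over nested families, together with the observation (via complements) that every fuzzy closed set is fuzzy semiclosed --- is the standard proof one would give, it is complete, and it uses only facts already recorded in the paper's preliminaries; the alternative route to (ii) through the identities $\Cl(\lambda)=1-\Int(1-\lambda)$ and $\Cl_s(\lambda)=1-\Int_s(1-\lambda)$ is also sound.
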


\vspace{5mm}
To make our task easier, we introduce the following notions:
\begin{defn}
Let $\lambda$ be a fuzzy set in $X$ and let $(X,\T)$ be a fuzzy topological space. Then 
	\begin{enumerate}[(i)]
		\item $\lambda$ is called somewhat fuzzy open set if $\lambda=0$ or $\Int(\lambda)\ne 0$.
		\item $\lambda$ is called somewhat fuzzy semiopen set if $\lambda=0$ or $\Int_s(\lambda)\ne 0$.
	\end{enumerate}
\end{defn}
\begin{defn}\label{cont}
	A function $f$ from a fuzzy topological space $(X,\T)$ into another fuzzy topological space $(Y,\U)$ is said to be 
	\begin{enumerate}[(1)]
		\item fuzzy continuous \cite{chang} if $f^{-1}(\beta)\in\T$ for each $\beta\in \U$.
		\item fuzzy semicontinuous \cite{azad} if $f^{-1}(\beta)$ is fuzzy semiopen for each $\beta\in \U$.
		\item somewhat fuzzy continuous \cite{T2} if $f^{-1}(\beta)$ is somewhat fuzzy open for each $\beta\in \U$.
		\item somewhat fuzzy semicontinuous \cite{T1} if $f^{-1}(\beta)$ is somewhat fuzzy semiopen for each $\beta\in \U$.
	\end{enumerate} 
\end{defn}

\begin{defn}\label{open}
	A function $f$ from a fuzzy topological space $(X,\T)$ into another fuzzy topological space $(Y,\U)$ is said to be 
	\begin{enumerate}[(1)]
		\item fuzzy open \cite{wong} if $f(\alpha)\in\U$ for each $\alpha\in \T$.
		\item fuzzy semiopen \cite{azad} if $f(\alpha)$ is fuzzy semiopen for each $\alpha\in \T$.
		\item somewhat fuzzy open \cite{T2} if $f(\alpha)$ is somewhat fuzzy open for each $\alpha\in \T$.
		\item somewhat fuzzy semiopen \cite{T1} if $f(\alpha)$ is somewhat fuzzy semiopen for each $\alpha\in \T$.
	\end{enumerate} 
\end{defn}

For other undefined terminologies and notations stated in this note, we refer the readers to  \cite{chang,wong}.

\section{Main results}
We start by proving several lemmas that help us achieve our goal.
\begin{lem}\label{L1}
Let $\lambda$ be a non-zero fuzzy set in a fuzzy topological space $(X,\T)$. Then $\lambda$ is fuzzy semiopen iff $\Cl(\lambda)=\Cl(\Int(\lambda))$.
\end{lem}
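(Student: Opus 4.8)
The plan is to reduce everything to three standard properties of the fuzzy closure and interior operators, all of which follow directly from the definitions given in the preliminaries: monotonicity (if $\mu\leq\nu$ then $\Cl(\mu)\leq\Cl(\nu)$ and $\Int(\mu)\leq\Int(\nu)$), the sandwiching inequalities $\Int(\lambda)\leq\lambda\leq\Cl(\lambda)$, and idempotency of closure, $\Cl(\Cl(\mu))=\Cl(\mu)$. The last of these is the only property requiring a short justification: it amounts to checking that $\Cl(\mu)$ is itself a fuzzy closed set, which holds because $\Cl(\mu)$ is an infimum of fuzzy closed sets and arbitrary infima of fuzzy closed sets are fuzzy closed (their complements are suprema of fuzzy open sets, hence fuzzy open by axiom (iii) of Definition \ref{top}). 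Once these are in hand, neither implication requires any computation specific to $\lambda$ being non-zero.

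For the forward implication I would assume $\lambda$ is fuzzy semiopen, i.e. $\lambda\leq\Cl(\Int(\lambda))$, and prove the two inequalities that constitute $\Cl(\lambda)=\Cl(\Int(\lambda))$ separately. The inequality $\Cl(\Int(\lambda))\leq\Cl(\lambda)$ is immediate from $\Int(\lambda)\leq\lambda$ together with monotonicity. For the reverse inequality I would apply $\Cl$ to both sides of $\lambda\leq\Cl(\Int(\lambda))$, using monotonicity to get $\Cl(\lambda)\leq\Cl(\Cl(\Int(\lambda)))$, and then collapse the right-hand side by idempotency to $\Cl(\Int(\lambda))$. Combining the two inequalities yields the desired equality.

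For the converse I would assume $\Cl(\lambda)=\Cl(\Int(\lambda))$ and simply chain it with the universal inequality $\lambda\leq\Cl(\lambda)$ to obtain $\lambda\leq\Cl(\Int(\lambda))$, which is exactly the definition of $\lambda$ being fuzzy semiopen. This direction is essentially a one-line substitution.

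I do not anticipate a genuine obstacle here; the statement is a clean reformulation of the semiopenness condition in terms of closures. The only point deserving care is the idempotency step, so I would make sure to record why $\Cl(\Int(\lambda))$ is fuzzy closed before invoking $\Cl(\Cl(\cdot))=\Cl(\cdot)$. I would also remark that the hypothesis $\lambda\neq 0$ is not actually needed for the equivalence (the case $\lambda=0$ holds trivially on both sides), so it is included only to align this lemma with its intended use later in the paper.
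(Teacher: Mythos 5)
Your proposal is correct and follows essentially the same route as the paper's proof: the forward direction via the two inequalities $\Cl(\Int(\lambda))\leq\Cl(\lambda)$ (from $\Int(\lambda)\leq\lambda$) and $\Cl(\lambda)\leq\Cl(\Int(\lambda))$ (by applying $\Cl$ to the semiopenness inequality), and the converse by chaining $\lambda\leq\Cl(\lambda)=\Cl(\Int(\lambda))$. You merely make explicit the monotonicity and idempotency facts that the paper uses silently, and your observation that the hypothesis $\lambda\neq 0$ is not actually needed is accurate.
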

\begin{proof}
Given a fuzzy semiopen set $\lambda$ such that $\lambda\ne 0$. Then $\lambda\leq\Cl(\Int(\lambda))$ and so $\Cl(\lambda)\leq\Cl(\Int(\lambda))$. On the other hand, we always have $\Int(\lambda)\leq \lambda$. Therefore $\Cl(\Int(\lambda))\leq \Cl(\lambda)$. Hence $\Cl(\lambda)=\Cl(\Int(\lambda))$.

Conversely, assume that $\Cl(\lambda)=\Cl(\Int(\lambda))$, but $\lambda\leq\Cl(\lambda)$ always, so $\lambda\leq\Cl(\Int(\lambda))$. Thus $\lambda$ is fuzzy semiopen. 
\end{proof}

\begin{lem}\label{L2}
Let $\lambda$ be a non-zero fuzzy set in the fuzzy topological space $(X,\T)$. If $\lambda$ is fuzzy semiopen, then $\Int(\lambda)\neq 0$.
\end{lem}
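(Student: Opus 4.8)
The plan is to argue by contradiction. I would suppose that $\lambda$ is a non-zero fuzzy semiopen set but that, contrary to the claim, $\Int(\lambda)=0$. The target is to force $\lambda=0$, contradicting the hypothesis $\lambda\neq 0$.

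The one observation that does any real work is that $\Cl(0)=0$, so I would establish this first. The constant fuzzy set $1$ belongs to $\T$ by Definition \ref{top}(i), hence its complement $0=1-1$ is fuzzy closed. Since $0\leq 0$ and $0$ is itself fuzzy closed, the infimum defining $\Cl(0)=\inf\{\mu:0\leq\mu,\,1-\mu\in\T\}$ is attained at $\mu=0$, so $\Cl(0)=0$. This is the only step that is not pure substitution, and even it is an immediate consequence of the topology axioms.

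With this in hand the conclusion drops out. Because $\lambda$ is fuzzy semiopen, $\lambda\leq\Cl(\Int(\lambda))$; feeding in the assumption $\Int(\lambda)=0$ gives $\lambda\leq\Cl(0)=0$, whence $\lambda=0$, the desired contradiction. Therefore $\Int(\lambda)\neq 0$.

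Alternatively, I could avoid the contradiction and route through Lemma \ref{L1}: since $\lambda\neq 0$ and $\lambda\leq\Cl(\lambda)$ always holds, we get $\Cl(\lambda)\neq 0$, and Lemma \ref{L1} yields $\Cl(\Int(\lambda))=\Cl(\lambda)\neq 0$; as $\Cl(0)=0$, this already forces $\Int(\lambda)\neq 0$. Either way there is no genuine obstacle — the content is entirely concentrated in the fact that the empty fuzzy set is closed, giving $\Cl(0)=0$.
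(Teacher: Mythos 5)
Your proposal is correct and matches the paper's own argument: the paper also proceeds by contradiction, assuming $\Int(\lambda)=0$ and concluding via Lemma \ref{L1} that $\Cl(\lambda)=\Cl(\Int(\lambda))=\Cl(0)=0$, hence $\lambda=0$. Your main route simply unfolds the definition $\lambda\leq\Cl(\Int(\lambda))$ instead of citing Lemma \ref{L1} (and your alternative route is literally the paper's proof), while also making explicit the fact $\Cl(0)=0$ that the paper uses silently — a minor gain in completeness, not a different approach.
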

\begin{proof}
Assume if possible that $\lambda$ is a fuzzy semiopen set for which $\Int(\lambda)=0$, by Lemma \ref{L1}, $\Cl(\lambda)=0$ implies that $\lambda=0$, a contradiction.
\end{proof}

\begin{lem}\label{L3}
Let $\lambda$ be a fuzzy set in a fuzzy topological space $(X,\T)$. Then $\lambda$ is somewhat fuzzy open iff it is somewhat fuzzy semiopen.
\end{lem}
\begin{proof}
Wolg we assume $\lambda\ne 0$, otherwise, the equivalence of these concepts is clear. Since, by Lemma \ref{interior-lemma}, $\Int(\lambda)\leq\Int_s(\lambda)$, so this concludes that somewhat fuzzy openness of $\lambda$ implies somewhat fuzzy semiopenness. 

Conversely, suppose that $\lambda$ is somewhat fuzzy semiopen. By definition, for some $x\in\lambda$, there is a fuzzy semiopen set $\mu$ in $X$ such that $x\in\mu\leq\lambda$. By Lemma \ref{L2}, $\Int(\mu)\ne 0$. Therefore, for some $x\in\lambda$, one can find a fuzzy open set $\beta$ such that $x\in\beta\leq\mu\leq\lambda$. Hence, $\Int(\lambda)\ne 0$. This shows that $\lambda$ is somewhat fuzzy open.
\end{proof} 

From the above lemmas, perhaps a relationship among the stated classes of fuzzy open sets can be concluded. 
\begin{figure}[H]
	\begin{center}
		\begin{tikzcd}[sep=small, arrows=Rightarrow]
			\text{fuzzy open}\arrow{r}&\text{fuzzy semiopen}\arrow{r}&\text{somewhat fuzzy open}\arrow[equal]{r}&\text{somewhat fuzzy semiopen}
		\end{tikzcd}
		\caption*{\quad Diagram 1: Relationships between generalized fuzzy open sets}\label{D1}
	\end{center}
\end{figure}

None of the arrows in the diagram above can be reversed, as shown in the following example:
\begin{exam}
\label{ex1}
Let $\mu, \lambda, \sigma ,\alpha, \beta$ be fuzzy sets on $\mathbb{I}$ defined as:
	\[
	\mu(x)=\begin{cases}
		0, &\text{ if } 0\leq x\leq 1/2\\
		2x-1, &\text{ if } 1/2\leq x\leq 1,
	\end{cases}
	\]
	\[
	\lambda(x)=\begin{cases}
		1, &\text{ if } 0\leq x\leq 1/4\\
		-4x+2, &\text{ if } 1/4\leq x\leq 1/2\\
		0, &\text{ if } 1/2\leq x\leq 1,
	\end{cases}
	\]
	\[
	\sigma(x)=\begin{cases}
		1, &\text{ if } 0\leq x\leq 1/4\\
		-4x+2, &\text{ if } 1/4\leq x\leq 1/2\\
		2x-1, &\text{ if } 1/2\leq x\leq 1,
	\end{cases}
	\]
		\[
	\alpha(x)=\begin{cases}
		0, &\text{ if } 0\leq x\leq 1/4\\
		4x/3-1/3, &\text{ if } 1/4\leq x\leq 1,
	\end{cases}
	\]
and 
\noindent
	\[
\beta(x)=\begin{cases}
	2x, &\text{ if } 0\leq x\leq 1/2\\
	1, &\text{ if } 1/2\leq x\leq 1.
\end{cases}
\]
	
The family $\T=\{0,\mu, \lambda, \sigma, 1\}$ forms a fuzzy topology on $\mathbb{I}$. The set $\alpha$ is fuzzy semiopen but not fuzzy open (see, Example 4.5 in \cite{azad}). One can easily check that the set $\beta$ is somewhat fuzzy open but not fuzzy semiopen.
\end{exam}

Thangaraj and Balasubramanian showed that fuzzy continuity (openness) of a function implies somewhat fuzzy continuity (openness), but in reality more than that is true (from Diagram 1).
\begin{figure}[H]
	\centering
	\begin{tikzcd}[sep=small]
		& 	
		\begin{tabular}{c}
		fuzzy continuous \\(fuzzy open)
		\end{tabular}
\hspace{-.2cm}\implies\hspace{-1cm}
		& 
		\begin{tabular}{c}
			fuzzy semicontinuous\\ (fuzzy semiopen)
		\end{tabular}
\hspace{-.2cm}\implies\hspace{-1cm}
		&
		\begin{tabular}{c}
		somewhat fuzzy continuous\\ (somewhat fuzzy open)
		\end{tabular}
	\end{tikzcd}
	\caption*{\quad Diagram 2: The connections between the generalized fuzzy functions}\label{D2}
\end{figure}

\begin{thm}
For a function $f:(X, \T)\rightarrow (Y,\U)$, the following statements hold:
\begin{enumerate}[(1)]
	\item $f$ is somewhat fuzzy continuous iff it is somewhat fuzzy semicontinuous.
	\item $f$ is somewhat fuzzy open iff it is somewhat fuzzy semiopen.
\end{enumerate}
\end{thm}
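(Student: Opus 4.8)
The plan is to deduce both equivalences directly from Lemma~\ref{L3}, which has already established the set-level fact that a fuzzy set is somewhat fuzzy open if and only if it is somewhat fuzzy semiopen. Since every one of the four function-properties in the statement is defined pointwise---by requiring that a certain family of fuzzy sets (either all preimages $f^{-1}(\beta)$ with $\beta\in\U$, or all images $f(\alpha)$ with $\alpha\in\T$) be somewhat fuzzy open, respectively somewhat fuzzy semiopen---the strategy is simply to transfer the set-level equivalence through the universal quantifier. No new constructions are needed.

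For part (1), I would unfold Definition~\ref{cont}: $f$ is somewhat fuzzy continuous exactly when $f^{-1}(\beta)$ is somewhat fuzzy open for every $\beta\in\U$, while $f$ is somewhat fuzzy semicontinuous exactly when $f^{-1}(\beta)$ is somewhat fuzzy semiopen for every $\beta\in\U$. Applying Lemma~\ref{L3} to the single fuzzy set $f^{-1}(\beta)$ shows that, for each fixed $\beta$, the two defining conditions are equivalent. Since this holds for every $\beta$, the two quantified statements coincide, giving the desired equivalence. For part (2), I would run the identical argument against Definition~\ref{open}, replacing the preimage $f^{-1}(\beta)$ by the image $f(\alpha)$ and quantifying over $\alpha\in\T$; Lemma~\ref{L3} applied to each $f(\alpha)$ closes the case.

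I do not expect a genuine obstacle here: the whole difficulty was already absorbed into Lemma~\ref{L3} (which in turn rested on Lemmas~\ref{L1} and~\ref{L2}), so what remains is a purely definitional passage from sets to functions. The one point I would state carefully is that Lemma~\ref{L3} is invoked separately for each member of the relevant family, so that no uniformity across different $\beta$ (or different $\alpha$) is required---the equivalence holds memberwise and therefore survives the universal quantification intact.
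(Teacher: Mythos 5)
Your proposal is correct and matches the paper's own proof, which likewise reduces both parts of the theorem to Lemma~\ref{L3} applied memberwise to the preimages $f^{-1}(\beta)$ and images $f(\alpha)$. The paper states this even more tersely, but the argument is identical to yours.
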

\begin{proof}
	The Lemma \ref{L3} guarantees that $\Int(\mu)\ne 0$ iff  $\Int_s(\mu)\ne 0$ for every fuzzy set $\mu$ and the proof can be concluded.
\end{proof}


We finish this note by pointing that the same result holds true for the aforementioned functions in general and soft topology. 

\end{document}